\newtheorem{lemma}{Lemma}
\newtheorem{theorem}{Theorem}
\numberwithin{equation}{section}
\begin{document}

\dedicatory{In memory of a beloved man and an Approx. Theorist Prof. D. V. Pai}

\leftline{ \scriptsize \it}
\title[]
{Some Fuzzy Korovkin type Approximation Theorems via Power Series Summability Method}
\maketitle
\begin{center}
{ \bf B. Baxhaku$^1$, P. N. Agrawal$^2$ and R. Shukla$^3$ \footnote{Corresponding author - Rahul Shukla}} \\
\vskip0.15in
$^{1}$Department of Mathematics, University of Prishtina, Kosovo\\
\vskip.15in
$^{2}$Department of Mathematics, Indian Institute of Technology Roorkee, India\\
\vskip.15in
$^{3}$Department of Mathematics, University of Delhi, India\\ 
\vskip.15in
emails: $^1$ behar.baxhaku@uni-pr.edu, $^2$p.agrawal@ma.iitr.ac.in, and $^3$rshukla@ma.iitr.ac.in
\end{center}


\begin{abstract}
This article provides a power series summability based Korovkin type approximation theorem for any fuzzy sequence of positive linear operators. Using the notion of fuzzy modulus of smoothness, we also derive an associated approximation theorem concerning the fuzzy rate of convergence of these operators. Furthermore, through an example, we illustrate that our summability based Korovin type theorem is an advantage over the fuzzy Korovkin type theorem proved in the seminal paper by Anastassiou (Stud. Univ. Babe\c{s}-Bolyai Math. 4 (2005), 3--10).\\ 
\textbf{Keywords:} Korovkin theorem, fuzzy number, power series summability method, fuzzy positive linear operators, fuzzy modulus of continuity. \\
\textbf{Mathematics Subject Classification(2020):} 41A36, 40C15, 40G10, 46E30.
\end{abstract}


\section{Introduction}
The concept of fuzziness is a famous key to measure the completeness of a certain assumption or concern. Notably, its applications occur everywhere around us in the areas like stock market, bio-medicines, weather predictions, robotics, and pattern recognitions, etc. It has been observed by the approximation theorists that one of the serious issues in classical approximation theory is to address the limit (or the statistical limit) with the absolute accuracy. To deal with this, fuzzy analogues of many classical approximation theorems have been established (please see \cite{Anas2}), and these efforts have laid the formal foundation of the Fuzzy Approximation Theory.


Zadeh \cite{Zadeh} was the first mathematician who systematically introduced and developed the elements of fuzzy set theory, particularly membership function and fuzzy numbers. Goetschel and Voxman \cite{GV} presented a slightly modified definition of fuzzy numbers and also defined a metric for this family of fuzzy sets. Subsequently, researchers defined different classes of sequences of fuzzy numbers and studied them from different perspectives, for instance (cf. \cite{DP, EA1, EA2, Savas, TB1, TB2}). Inspired by the work of Gal \cite{Gal}, Anastassiou \cite{Anas1} proved a basic fuzzy Korovkin type theorem using Shisha-Mond inequality and established a fuzzy rate of convergence in terms of fuzzy modulus of continuity. Anastassiou \cite{Anas3} studied some fuzzy-wavelet type operators and fuzzy-neural network type operators. Furthermore, the author established a higher order point-wise approximation result in fuzzy sense. Using the A-statistical convergence technique, Anastassiou and Duman \cite{Anas4} proposed some fuzzy Korovkin type results and showed that their results have an advantage over the corresponding results in classical approximation. Recently, Yavuz \cite{Yavuz} established a fuzzy trigonometric Korovkin type approximation theorem by using power series summability method, and also derived another approximation result for fuzzy periodic continuous functions with the help of fuzzy modulus of continuity. Using an example concerning fuzzy Abel-Poisson convolution operators, the author showed that the result established in \cite{ Anas} (Theorem 1.1) does not work but his result obtained by means of power series method works. In other words, the author's result is a non-trivial generalization of the result given in \cite{Anas}. For a systematic exposition of the idea of fuzzy approximation theory, we highly recommend the readers to follow the classic book by Anastassiou \cite{Anas2}.

Inspired by these studies, we propose to investigate the fuzzy Korovkin type approximation theorems for fuzzy continuous functions on a compact support $J=[a,b]\subset\mathbb{R}$ by means of power series method. Before proceeding further, we recall some basic definitions of fuzzy set theory and the power series summability method in the fuzzy number space. 

\section{Important Keywords and Power Series Summability}
In fuzzy mathematics, a membership function is a map $f: \mathbb{R} \to [0,1]$ which decides the grade of membership of any real number in the given fuzzy set $A$. Subsequently, any membership function is said to be a fuzzy number if it is normal, fuzzy convex, upper semi-continuous, and the closure of the set $\{t\in\mathbb{R}: f(t)>0\}$ is compact. We denote the set of all fuzzy real numbers by the analogous notation $\mathbb{R}_{\mathcal{F}}$. For any $x\in \mathbb{R}_{\mathcal{F}}$, we have the following $\alpha$-level fuzzy sets:     
\begin{eqnarray*}
[x]_{_\alpha} = \begin{cases}
                   \{t\in \mathbb{R}:x(t)\geq \alpha\}, & \hbox{for $0<\alpha\leq 1$;} \\\\
                   \overline{\{t\in \mathbb{R}:x(t)> \alpha\}}, & \hbox{for $\alpha=0$.}
                 \end{cases}               
\end{eqnarray*}
It is well known \cite{RW} that the set $[x]_{_\alpha}$ is compact for each $\alpha$. Furthermore, for all $x, y\in \mathbb{R}_{\mathcal{F}}$ and $\lambda\in \mathbb{R}$, the algebra on the $\alpha$-level sets is defined as follows:
$$[x\bigoplus y]_{\alpha}=[x]_{\alpha}+[y]_{\alpha}, [\lambda\bigodot x]_{\alpha}=\lambda[x]_{\alpha},$$
where $[z]_{\alpha}=[z_{\alpha}^{-},z_{\alpha}^{+}]$, and $z_{\alpha}^{-}, z_{\alpha}^{+}$ are the end points of $[z]_{\alpha}$ for each $\alpha\in [0,1]$. In addition, we have a partial ordering in $\mathbb{R}_{\mathcal{F}}$ by $x\preceq y$, if and only if $x_{\alpha}^{-}\preceq y_{\alpha}^{-}$ and $x_{\alpha}^{+}\preceq y_{\alpha}^{+},~\forall~\alpha\in [0,1].$ We have a metric $D:\mathbb{R}_{\mathcal{F}}\times\mathbb{R}_{\mathcal{F}}\rightarrow\mathbb{R}^{+}\cup \{0\}$ as
$$D(x,y)=\sup_{\alpha\in [0,1]}d([x]_{\alpha},[y]_{\alpha}),$$
where $d$ is the usual Hausdorff metric given by
$$d([x]_{\alpha},[y]_{\alpha})=max\{|x_{\alpha}^{-}-y_{\alpha}^{-}|, |x_{\alpha}^{+}-y_{\alpha}^{+}|\}.$$
From \cite{Congxin}, it is known that $(\mathbb{R}_{\mathcal{F}},D)$ is a complete metric space.\\ For the fuzzy number valued functions $g,h:J\subset \mathbb{R}\rightarrow \mathbb{R}_{\mathcal{F}}$, the distance between $g$ and $h$ is defined by
$$D^{*}(g,h)=\sup_{t\in J}D(g(t),h(t)).$$
Let $C^{\mathcal{F}}(J)$ be the space of all fuzzy continuous functions on the interval $J$, then the operator $L:C^{\mathcal{F}}(J)\rightarrow C^{\mathcal{F}}(J)$ is called a fuzzy linear operator provided
$$L(a_1\bigodot g_1\bigoplus a_2\bigodot g_2)=a_1\bigodot L(g_1)\bigoplus a_2\bigodot L(g_2),$$
where $a_1,a_2\in \mathbb{R}$ and $g_1,g_2\in C^{\mathcal{F}}(J)$. Further, the fuzzy linear operator $L$ is said to be fuzzy positive linear operator if for any $g,h\in C_{\mathcal{F}}(J)$, with $g(t)\preceq h(t)$, $\forall ~ t\in J$, we have $L(g;t)\preceq L(h;t).$
\noindent 
The power series summability method (or the P-Summability) was introduced by Borwein \cite{Bor}, which includes the Abel and Borel summability methods as special cases. Many researchers have used the P-summability to establish Korovkin type approximation theorems in the settings of various function spaces, some recent works can be found in \cite{PN, PN1, KDe, Tas1, Tas}. Recently, Sezer and $\c{C}$anak \cite{SC} have applied the P-summability to the space of fuzzy numbers $\mathbb{R}_{\mathcal{F}}$ and given some Tauberian theorems based on the said summability method. We recall their nice P-summability based convergence definition as follows:

Consider the power series $p(t)=\sum\limits_{i=1}^{\infty}p_i t^{i-1}$ corresponding to the non-negative sequence $\big{<}p_i;p_1>0\big{>}$ with the radius of convergence 1, provided $\sum_{i=1}^{n}p_i\rightarrow \infty$, as $n\rightarrow \infty$. A sequence $(u_n)$ of fuzzy numbers is said to be P-summable to some fuzzy number $\mu$ determined by $p$ if $\sum\limits_{n=1}^{\infty}u_np_n t^{n-1}$ converges for $t\in(0,1)$ and
$$\lim\limits_{t\to 1^-}\frac{1}{p(t)}\sum\limits_{n=1}^{\infty}u_np_nt^{n-1}=\mu.$$  
\noindent
In our further consideration, let $C(J)$ denote the space of all continuous functions on $J$ with the uniform norm $\|.\|.$
The prime aim of the present paper is to examine a fuzzy Korovkin type approximation theorem in the algebraic case by using the fuzzy P-summability method and also prove a related approximation theorem for fuzzy continuous functions on a closed and bounded interval by means of fuzzy modulus of continuity. We also give an example to show that our Korovkin type theorem is a non-trivial generalization of the basic fuzzy Korovkin theorem.

\section{Fuzzy Korovkin Theorem via Power Series Method}
\noindent
For the better understanding of our main results, we shall recall the following standard Korovkin theorem for fuzzy positive linear operators:
%
\begin{theorem}(\cite{Anas1}, Thm. 4)\label{thm.asym.voronovaskaya}
Let $\{\mathcal{T}_{n}\}_{n\in \mathbb{N}}$ be a sequence of fuzzy positive linear operators from $C^{(\mathcal{F})}(J)$ into itself. Assume that there exists a corresponding sequence $\{\overline{\rm \mathcal{T}}_{n}\}_{n\in \mathbb{N}}$ of positive linear operators from $C(J)$ into itself with the property
\begin{eqnarray}\label{vetia1}
\big\{\mathcal{T}_{n}(f;x)\big\}_{\alpha}^{\pm}=\overline{\rm \mathcal{T}}_{n}(f_{\alpha}^{\pm};x)
\end{eqnarray}	
respectively, for all $\alpha\in [0, 1],$ $\forall f\in C^{(\mathcal{F})}(J)$. Further, suppose that
\begin{eqnarray*}
	\lim\limits_{n\to \infty}||\overline{\rm \mathcal{T}}_{n}(e_i)-e_i||=0,
\end{eqnarray*}
for $i=0,1,2$ with $e_0(x)=1,$ $e_1(x)=x,$ $e_2(x)=x^2.$ Then, 
 for all $f\in C^{(\mathcal{F})}(J)$ we have
\begin{eqnarray*}
	\lim\limits_{n\to \infty}D^{*}\left( \mathcal{T}_{n}(f),f\right)=0.
\end{eqnarray*}

\end{theorem}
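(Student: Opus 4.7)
The plan is to reduce the fuzzy convergence statement to a uniform (over $\alpha\in[0,1]$) real-valued Korovkin estimate applied to the endpoint families $\{f_\alpha^-\}$ and $\{f_\alpha^+\}$, and then repackage that via the definitions of $d$, $D$ and $D^*$. Using hypothesis (\ref{vetia1}) together with $d([x]_\alpha,[y]_\alpha)=\max(|x_\alpha^- - y_\alpha^-|,|x_\alpha^+ - y_\alpha^+|)$ and $D=\sup_\alpha d$, one rewrites
\[
D^*(\mathcal{T}_n f, f)=\sup_{t\in J}\,\sup_{\alpha\in[0,1]}\max\bigl(|\overline{\mathcal{T}}_n(f_\alpha^-;t)-f_\alpha^-(t)|,\ |\overline{\mathcal{T}}_n(f_\alpha^+;t)-f_\alpha^+(t)|\bigr),
\]
so that it suffices to establish $\sup_{\alpha\in[0,1]}\|\overline{\mathcal{T}}_n f_\alpha^{\pm} - f_\alpha^{\pm}\|\to 0$ as $n\to\infty$.

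For the individual real-valued convergence I would invoke the Shisha--Mond quantitative form of the classical Korovkin theorem on $C(J)$: for any $g\in C(J)$,
\[
\|\overline{\mathcal{T}}_n g - g\|\le \|g\|\cdot\|\overline{\mathcal{T}}_n e_0 - e_0\| + \bigl(\|\overline{\mathcal{T}}_n e_0\|+1\bigr)\,\omega(g,\delta_n),
\]
where $\omega$ is the ordinary modulus of continuity on $J$ and $\delta_n^2$ is a fixed linear combination of $\|\overline{\mathcal{T}}_n e_i-e_i\|$ for $i=0,1,2$ (with coefficients depending only on $J$). By the hypotheses $\|\overline{\mathcal{T}}_n e_i-e_i\|\to 0$, so $\delta_n\to 0$ and $\|\overline{\mathcal{T}}_n e_0\|$ is eventually bounded by a constant $M$.

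The main obstacle is upgrading this estimate, pointwise in $\alpha$, to one that is uniform in $\alpha\in[0,1]$; this is exactly where the fuzzy continuity of $f$ must be spent. Since $f\in C^{(\mathcal{F})}(J)$ on a compact interval is uniformly $D$-continuous, the pointwise inequality
\[
\max\bigl(|f_\alpha^-(s)-f_\alpha^-(t)|,\ |f_\alpha^+(s)-f_\alpha^+(t)|\bigr)=d\bigl([f(s)]_\alpha,[f(t)]_\alpha\bigr)\le D(f(s),f(t))
\]
gives, uniformly in $\alpha\in[0,1]$, the equicontinuity bound $\omega(f_\alpha^\pm,\delta)\le \sup_{|s-t|\le\delta}D(f(s),f(t))\to 0$ as $\delta\to 0^+$. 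Uniform boundedness follows similarly: fixing $t_0\in J$,
\[
\|f_\alpha^\pm\|\le |f_\alpha^\pm(t_0)|+\sup_{t\in J}D(f(t),f(t_0)),
\]
and $|f_\alpha^\pm(t_0)|$ is bounded independently of $\alpha$ since the support $[f(t_0)]_0$ is compact. Substituting these $\alpha$-uniform quantities into the Shisha--Mond inequality with $g=f_\alpha^\pm$ yields an upper bound for $\|\overline{\mathcal{T}}_n f_\alpha^\pm - f_\alpha^\pm\|$ that is independent of $\alpha$ and vanishes as $n\to\infty$. Combining this with the first step gives $D^*(\mathcal{T}_n f, f)\to 0$, which is the desired conclusion.
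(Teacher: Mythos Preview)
Your argument is correct. Note, however, that the paper does not itself prove this statement: Theorem~\ref{thm.asym.voronovaskaya} is quoted from \cite{Anas1} as background, with no proof supplied here. Your Shisha--Mond route is in fact the one used in the cited source, and the care you take to secure $\alpha$-uniformity---bounding $\omega(f_\alpha^\pm,\delta)\le\omega_1^{\mathcal{F}}(f,\delta)$ via the fuzzy modulus and $\sup_\alpha\|f_\alpha^\pm\|<\infty$ via compactness of $[f(t_0)]_0$---is exactly what is needed. For comparison, the paper's proof of the power-series analogue (Theorem~\ref{thm.asym.voronovaskaya1}) proceeds instead through the elementary Korovkin bound $|f_\alpha^\pm(z)-f_\alpha^\pm(x)|\le\varepsilon+2M_\alpha^\pm(z-x)^2/\delta^2$ and only at the end forms $K=\sup_{\alpha}\max(K_\alpha^+,K_\alpha^-)$, without explicitly arguing, as you do, that this supremum is finite or that $\delta$ can be chosen independently of $\alpha$; your handling of the uniformity is therefore cleaner than what the paper provides for its own main theorem.
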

Let us consider the sequence $\{\mathcal{T}_n\}$ of fuzzy positive linear operators from  $C^{(\mathcal{F})}(J)$ into itself with the property (\ref{vetia1}). Further, let for each $t \in (0,1)$
\begin{eqnarray}\label{fz1}
\sum\limits_{n=1}^{\infty}\|\overline{\rm \mathcal{T}}_{n}(1)\|p_n t^{n-1}<\infty.
\end{eqnarray}
Then for all $f\in C^{(\mathcal{F})}(J)$, from \cite{SC} it is known that the series $\sum\limits_{n=1}^{\infty}\overline{\rm \mathcal{T}}_{n}(f_{\alpha}^{\pm})p_n t^{n-1}$ and series $\sum\limits_{n=1}^{\infty}{ \mathcal{T}}_{n}(f)p_n t^{n-1}$ converge for $t \in (0,1).$

We now present our first main result, the fuzzy Korovkin Theorem via power series method.

\begin{theorem}\label{thm.asym.voronovaskaya1}

Let $\{\mathcal{T}_{n}\}_{n\in \mathbb{N}}$ be a sequence of fuzzy positive linear operators from $C^{(\mathcal{F})}(J)$ into itself. Assume that there exists a corresponding sequence $\{\overline{\rm \mathcal{T}}_{n}\}_{n\in \mathbb{N}}$ of positive linear operators from $C(J)$ into itself with the properties (\ref{vetia1}) and (\ref{fz1}). Further, let
\begin{eqnarray}\label{eqthm0}
\lim\limits_{t\to 1^-}||\frac{1}{p(t)}\sum\limits_{n=1}^{\infty}p_{n}t^{n-1}\overline{\rm \mathcal{T}}_{n}(e_i)-e_i||=0,
\end{eqnarray}
for each test function $e_{i},\;i=0,1,2$. Then, for all $f\in C^{(\mathcal{F})}(J)$ we have
\begin{eqnarray}\label{eqthm1}
\lim\limits_{t\to 1^-}D^{*}\left(\frac{1}{p(t)}\sum\limits_{n=1}^{\infty}p_{n}t^{n-1} \mathcal{T}_{n}(f),f\right)=0.
\end{eqnarray}



\end{theorem}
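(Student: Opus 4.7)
My plan is to reduce the fuzzy convergence (\ref{eqthm1}) to a uniform-in-$\alpha$ version of the classical power-series Korovkin theorem applied to the endpoint functions $f_{\alpha}^{\pm}\in C(J)$, and then run a standard Shisha--Mond argument weighted by $p_{n}t^{n-1}/p(t)$. First I would combine (\ref{vetia1}) with the $\mathbb{R}$-linearity of the $\alpha$-projections, the positivity of the weights $p_{n}t^{n-1}/p(t)$, and the convergence secured by (\ref{fz1}) to obtain
\begin{equation*}
\left[\frac{1}{p(t)}\sum_{n=1}^{\infty}p_{n}t^{n-1}\mathcal{T}_{n}(f;x)\right]_{\alpha}^{\pm}=\frac{1}{p(t)}\sum_{n=1}^{\infty}p_{n}t^{n-1}\overline{\mathcal{T}}_{n}(f_{\alpha}^{\pm};x).
\end{equation*}
By the definitions of $D^{*}$, $D$, and the Hausdorff metric $d$, the quantity inside (\ref{eqthm1}) then equals
\begin{equation*}
\sup_{x\in J}\,\sup_{\alpha\in[0,1]}\,\max_{\sigma\in\{-,+\}}\left|\frac{1}{p(t)}\sum_{n=1}^{\infty}p_{n}t^{n-1}\overline{\mathcal{T}}_{n}(f_{\alpha}^{\sigma};x)-f_{\alpha}^{\sigma}(x)\right|,
\end{equation*}
so it suffices to prove a crisp Korovkin-type estimate for $\overline{\mathcal{T}}_{n}$ whose constants depend on $g\in C(J)$ only through $\|g\|$ and the ordinary modulus of continuity $\omega(g,\delta)$.

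Second, I would run the classical Shisha--Mond argument in the crisp setting. For any $g\in C(J)$, $x\in J$, and $\delta>0$, positivity and linearity of $\overline{\mathcal{T}}_{n}$ give
\begin{equation*}
\bigl|\overline{\mathcal{T}}_{n}(g;x)-g(x)\bigr|\le \|g\|\,\bigl|\overline{\mathcal{T}}_{n}(e_{0};x)-1\bigr|+\omega(g,\delta)\Bigl(\overline{\mathcal{T}}_{n}(e_{0};x)+\tfrac{1}{\delta^{2}}\overline{\mathcal{T}}_{n}((s-x)^{2};x)\Bigr).
\end{equation*}
Expanding $\overline{\mathcal{T}}_{n}((s-x)^{2};x)=\overline{\mathcal{T}}_{n}(e_{2};x)-2x\,\overline{\mathcal{T}}_{n}(e_{1};x)+x^{2}\,\overline{\mathcal{T}}_{n}(e_{0};x)$, multiplying by $p_{n}t^{n-1}/p(t)$, summing over $n$, and taking the uniform norm in $x\in J$ produces a bound that is a finite linear combination of the three quantities appearing in (\ref{eqthm0}) with coefficients depending only on $\|g\|$, $\omega(g,\delta)$, $\delta$, and $\max_{i}\|e_{i}\|$, plus a residual of order $\omega(g,\delta)$. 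By (\ref{eqthm0}) the variable part tends to $0$ as $t\to 1^{-}$, so choosing $\delta$ small first and then $t$ close to $1^{-}$ makes the whole bound arbitrarily small.

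To close the loop at the fuzzy level uniformly in $\alpha$, I would use the nesting $[f(x)]_{\alpha}\subset[f(x)]_{0}$, which gives $\sup_{\alpha\in[0,1]}\|f_{\alpha}^{\pm}\|\le \max\{\|f_{0}^{-}\|,\|f_{0}^{+}\|\}<\infty$ since $f_{0}^{\pm}\in C(J)$ with $J$ compact, together with the inequality
\begin{equation*}
|f_{\alpha}^{\sigma}(x)-f_{\alpha}^{\sigma}(y)|\le d\bigl([f(x)]_{\alpha},[f(y)]_{\alpha}\bigr)\le D\bigl(f(x),f(y)\bigr),
\end{equation*}
which yields $\omega(f_{\alpha}^{\pm},\delta)\le \omega^{(\mathcal{F})}(f,\delta):=\sup_{|x-y|\le \delta}D(f(x),f(y))$, the latter tending to $0$ as $\delta\to 0^{+}$ by the uniform fuzzy continuity of $f$ on $J$. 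Inserting these $\alpha$-independent bounds into the crisp estimate with $g=f_{\alpha}^{\pm}$ and taking $\sup_{\alpha}$ yields (\ref{eqthm1}). The main obstacle is precisely this uniform-in-$\alpha$ control of $\|f_{\alpha}^{\pm}\|$ and $\omega(f_{\alpha}^{\pm},\delta)$; once it is established, the remainder is a routine insertion of the power-series average into the classical Shisha--Mond inequality followed by an application of (\ref{eqthm0}) to $e_{0},e_{1},e_{2}$.
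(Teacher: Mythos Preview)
Your proposal is correct and follows essentially the same strategy as the paper: reduce to the crisp endpoint functions via (\ref{vetia1}), apply a standard Korovkin/Shisha--Mond estimate to the power-series average of the $\overline{\mathcal{T}}_{n}$, take the supremum over $\alpha\in[0,1]$ and $x\in J$, and invoke (\ref{eqthm0}) for $e_{0},e_{1},e_{2}$. The only minor differences are that the paper uses the bound $|f_{\alpha}^{\pm}(z)-f_{\alpha}^{\pm}(x)|\le \epsilon + 2M_{\alpha}^{\pm}(z-x)^{2}/\delta^{2}$ rather than the modulus-of-continuity form, and that you are more explicit about the uniform-in-$\alpha$ control of $\|f_{\alpha}^{\pm}\|$ and $\omega(f_{\alpha}^{\pm},\delta)$ via the nesting $[f(x)]_{\alpha}\subset[f(x)]_{0}$ and the fuzzy modulus, whereas the paper simply sets $K=\sup_{\alpha}\max\{K_{\alpha}^{+},K_{\alpha}^{-}\}$.
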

\begin{proof} Let $f\in C^{(\mathcal{F})}(J)$, $\alpha\in [0,1$] and $x\in I$ be fixed. Suppose (\ref{eqthm0}) be satisfied and let $f\in C^{(\mathcal{F})}(J)$.  Then from the continuity of $f_{\alpha}^{\pm}\in C(I)$ it follows that for a given $\epsilon > 0$ there is a number $\delta > 0$ such that $\left|f_{\alpha}^{\pm}(z)-f_{\alpha}^{\pm}(x)\right|< \epsilon $ holds for every $z\in I$ satisfying $|z-x|<\delta.$ Then, for all $z\in I,$  we immediately get that
\begin{eqnarray}\label{eqthm3}
&&\left|f_{\alpha}^{\pm}(z)-f_{\alpha}^{\pm}(x)\right|\leq \epsilon +\frac{2 M_{\alpha}^{\pm}(z-x)^2}{\delta^2},
\end{eqnarray}
where $x\in J$ and $\alpha\in [0,1]$ where $M_{\alpha}^{\pm}=||f_{\alpha}^{\pm}||$ (see \cite{Korovkin}).  In view of the linearity and positivity of the operators $\overline{\rm \mathcal{T}}_{m,n}$, we have, for each $n\in \mathbb{N},$ that
\begin{eqnarray}\label{eqK2}
\left|\frac{1}{p(t)}\sum\limits_{n=1}^{\infty}\overline{\rm \mathcal{T}}_{n}(f_{\alpha}^{\pm};x)p_{n}t^{n-1}-f_{\alpha}^{\pm}(x)\right|&\leq& \frac{1}{p(t)}\sum\limits_{n=1}^{\infty}p_{n}t^{n-1}\overline{\rm \mathcal{T}}_{n}(|f_{\alpha}^{\pm}(z)-f_{\alpha}^{\pm}(x)|;x)\nonumber\\&+&M_{\alpha}^{\pm}\left|\frac{1}{p(t)}\sum\limits_{n=1}^{\infty}p_{n}t^{n-1}\overline{\rm \mathcal{T}}_{n}(1;x)-1\right|\nonumber\\&\leq&  \frac{1}{p(t)}\sum\limits_{n=1}^{\infty}p_{n}t^{n-1}\overline{\rm \mathcal{T}}_{n}\left(\epsilon +\frac{2 M_{\alpha}^{\pm}(z-x)^2}{\delta^2};x\right)\nonumber\\&+&M_{\alpha}^{\pm}\left|\frac{1}{p(t)}\sum\limits_{n=1}^{\infty}p_{n}t^{n-1}\overline{\rm \mathcal{T}}_{n}(1;x)-1\right|\nonumber\\&\leq& \frac{\epsilon}{p(t)}\sum\limits_{n=1}^{\infty}p_{n}t^{n-1}\overline{\rm \mathcal{T}}_{n}\left(1;x\right)+M_{\alpha}^{\pm}\left|\frac{1}{p(t)}\sum\limits_{n=1}^{\infty}p_{n}t^{n-1}\overline{\rm \mathcal{T}}_{n}(1;x)-1\right|\nonumber\\&+&\frac{2 M_{\alpha}^{\pm}}{\delta^2 p(t)}\sum\limits_{n=1}^{\infty}p_{n}t^{n-1}\overline{\rm \mathcal{T}}_{n}\left((z-x)^2;x\right),\nonumber
\end{eqnarray}
which yields
\begin{eqnarray}
\left|\frac{1}{p(t)}\sum\limits_{n=1}^{\infty}\overline{\rm \mathcal{T}}_{n}(f_{\alpha}^{\pm};x)p_{n}t^{n-1}-f_{\alpha}^{\pm}(x)\right|
&\leq& \epsilon+(M_{\alpha}^{\pm}+\epsilon+\frac{2d^2M_{\alpha}^{\pm}}{\delta^2})\left|\frac{1}{p(t)}\sum\limits_{n=1}^{\infty}p_{n}t^{n-1}\overline{\rm \mathcal{T}}_{n}(e_0;x)-e_0\right|\nonumber\\&+&\frac{ 2M_{\alpha}^{\pm}}{\delta^2}\bigg\{\left|\frac{1}{p(t)}\sum\limits_{n=1}^{\infty}p_{n}t^{n-1}\overline{\rm \mathcal{T}}_{n}\left(e_2;x,y\right)-e_2(x)\right|\nonumber\\&+&\frac{4dM_{\alpha}^{\pm}}{\delta^2}\left|\frac{1}{p(t)}\sum\limits_{n=1}^{\infty}p_{n}t^{n-1}\overline{\rm \mathcal{T}}_{n}\left(e_1;x\right)-e_1(x)\right|\nonumber\\&\leq& \epsilon+K_{\alpha}^{\pm}\sum\limits_{i=0}^2\left|\frac{1}{p(t)}\sum\limits_{n=1}^{\infty}\overline{\rm \mathcal{T}}_{n}(e_i;x)p_{n}t^{n-1}-e_i(x)\right|,
\end{eqnarray}
where $K_{\alpha}^{\pm}=\{\frac{2M_{\alpha}^{\pm}}{\delta^2},\frac{4dM_{\alpha}^{\pm}}{\delta^2},\epsilon+M_{\alpha}^{\pm}+\frac{2d^2M_{\alpha}^{\pm}}{\delta^2}\}$ and $d=\max\{|a|,|b|\}.$ Hence by using definition of the metric D and in view of the property (\ref{vetia1}) we get
\begin{eqnarray}
D\left(\frac{1}{p(t)}\sum\limits_{n=1}^{\infty}p_{n}t^{n-1} \mathcal{T}_{n}(f),f(x)\right)&\leq&\epsilon+K\bigg\{\sum\limits_{i=0}^2\left|\frac{1}{p(t)}\sum\limits_{n=1}^{\infty}\overline{\rm \mathcal{T}}_{n}(e_i;x)p_{n}t^{n-1}-e_i(x)\right|\bigg\},\nonumber
\end{eqnarray}
where $K=K(\epsilon)=\sup\limits_{\alpha\in[0,1]}\max\left\{K_{\alpha}^{+},K_{\alpha}^{-}\right\}.$ Then taking supremum over $x\in I$, we have
\begin{eqnarray}
D^*\left(\frac{1}{p(t)}\sum\limits_{n=1}^{\infty}p_{n}t^{n-1} \mathcal{T}_{n}(f),f\right)&\leq&\epsilon+K\bigg\{\sum\limits_{i=0}^2||\frac{1}{p(t)}\sum\limits_{n=1}^{\infty}\overline{\rm \mathcal{T}}_{n}(e_i)p_{n}t^{n-1}-e_i||\bigg\}.\nonumber
\end{eqnarray}
Finally, by taking limit as $t\to 1^-$ on both sides and using hypothesis (\ref{eqthm0}), we complete the proof.
\end{proof}

Our next result is devoted to discuss fuzzy approximation by the operators $\mathcal{T}_{n}$ for any function $f\in C^{(\mathcal{F})}(J) $ by means of the fuzzy modulus of continuity based on the P-summability. From \cite{Anas1}, the fuzzy analogue of the first order modulus of continuity is defined as: 
\begin{eqnarray*}
\omega_1^{\mathcal{F}}(f;\delta)=\sup\limits_{z,x\in J;|z-x|\leq\delta}D(f(z),f(x)), \quad \text{for any}~ \delta >0, f\in C^{(\mathcal{F})}(J).
\end{eqnarray*}


\begin{lemma}\cite{Anastassiou}\label{LMM1}
Let $f\in C^{(\mathcal{F})}(J)$. Then for any $\delta>0,$
\begin{eqnarray*}
\omega_1^{\mathcal{F}}(f;\delta)=\sup\limits_{\alpha\in[0,1]}\max\{\omega_1(f_{\alpha}^{-};\delta),\omega_1(f_{\alpha}^{+};\delta)\}.
\end{eqnarray*}
\end{lemma}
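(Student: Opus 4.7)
The plan is to simply unfold every definition on the left-hand side and then reorganize two suprema and a maximum until the right-hand side emerges. By definition,
\[
\omega_1^{\mathcal{F}}(f;\delta)=\sup_{\substack{z,x\in J\\ |z-x|\leq\delta}} D(f(z),f(x))
=\sup_{\substack{z,x\in J\\ |z-x|\leq\delta}}\;\sup_{\alpha\in[0,1]}\; d\bigl([f(z)]_\alpha,[f(x)]_\alpha\bigr),
\]
and since $[f(t)]_\alpha=[f_\alpha^-(t),f_\alpha^+(t)]$, the Hausdorff distance $d$ equals $\max\{|f_\alpha^-(z)-f_\alpha^-(x)|,\,|f_\alpha^+(z)-f_\alpha^+(x)|\}$. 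So the left-hand side is a triple expression: a sup over $(z,x)$ with $|z-x|\le\delta$, a sup over $\alpha\in[0,1]$, and a max of two absolute values.

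First I would interchange the two suprema; this is legal for any iterated supremum of a nonnegative function of $(z,x,\alpha)$, so
\[
\omega_1^{\mathcal{F}}(f;\delta)=\sup_{\alpha\in[0,1]}\;\sup_{\substack{z,x\in J\\ |z-x|\leq\delta}}\;\max\bigl\{|f_\alpha^-(z)-f_\alpha^-(x)|,\;|f_\alpha^+(z)-f_\alpha^+(x)|\bigr\}.
\]
Next I would push the inner supremum through the maximum using the elementary identity $\sup_w \max\{A(w),B(w)\}=\max\{\sup_w A(w),\sup_w B(w)\}$, which holds because $\max\{A,B\}\le\max\{\sup A,\sup B\}$ pointwise and $\sup A,\sup B\le\sup\max\{A,B\}$. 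This yields
\[
\omega_1^{\mathcal{F}}(f;\delta)=\sup_{\alpha\in[0,1]}\max\Bigl\{\sup_{|z-x|\le\delta}|f_\alpha^-(z)-f_\alpha^-(x)|,\;\sup_{|z-x|\le\delta}|f_\alpha^+(z)-f_\alpha^+(x)|\Bigr\},
\]
and recognizing each inner supremum as the classical first-order modulus of continuity $\omega_1(f_\alpha^\pm;\delta)$ finishes the argument.

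There is no real obstacle: the claim is a structural identity between iterated suprema and a max, so the only thing to be careful about is justifying the two swaps. The sup/sup interchange is immediate, and the sup/max interchange reduces to the pointwise inequality above. A minor point worth flagging is that $f_\alpha^\pm$ are continuous on $J$ (a standard consequence of $f\in C^{(\mathcal{F})}(J)$) so that each classical modulus $\omega_1(f_\alpha^\pm;\delta)$ is indeed finite and the manipulations take place in $[0,\infty)$ rather than requiring any extended-real conventions.
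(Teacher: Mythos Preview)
Your argument is correct: the identity is indeed a purely structural fact about iterated suprema and a maximum, and your two justified swaps (sup/sup, then sup/max) are exactly what is needed. The paper itself does not give a proof of this lemma at all; it merely quotes the result from \cite{Anastassiou}, so your direct derivation supplies what the paper omits. This is the standard way to establish the identity, so there is no meaningful difference in approach to discuss.
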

  \begin{theorem}\label{thm1}
Consider a sequence of fuzzy positive linear operators $\mathcal{T}_{n}:C^{(\mathcal{F})}(J)\rightarrow C^{(\mathcal{F})}(J).$ Assume that there exists a corresponding sequence $\{\overline{\rm \mathcal{T}}_{n}\}_{n\in \mathbb{N}}$ from $C(J)$ into itself with the properties (\ref{vetia1}) and (\ref{fz1}). Further suppose that the following conditions hold:
\begin{enumerate}
\item $\lim\limits_{t\to 1^-}||\frac{1}{p(t)}\sum\limits_{n=1}^{\infty}\overline{\rm \mathcal{T}}_{n}(e_0) p_{n}t^{n-1}-e_0||=0,$
\item $\lim\limits_{t\to 1^-}\omega_{1}^{\mathcal{F}}(f;\gamma(t))=0, where\,\, \gamma(t)=\sqrt{||\frac{1}{p(t)}\sum\limits_{n=1}^{\infty}\overline{\rm \mathcal{T}}_{n}(\phi)p_{n}t^{n-1}||}\,\,with \,\,\phi(z)=(z-x)^2,\,\,for\,\, each\,\,x\in J. $
\end{enumerate}
Then, for all $f\in C^{(\mathcal{F})}(J)$, we have
$$\lim\limits_{t\to 1^-}D^*\left(\frac{1}{p(t)}\sum\limits_{n=1}^{\infty}\mathcal{T}_{n}(f)p_{n}t^{n-1},f\right)=0.$$
\end{theorem}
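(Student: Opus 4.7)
The plan is to adapt the argument of the proof of Theorem~\ref{thm.asym.voronovaskaya1}, replacing the crude quadratic estimate (\ref{eqthm3}) by the sharper modulus-of-continuity inequality $|g(z)-g(x)| \leq \bigl(1 + \delta^{-2}(z-x)^2\bigr)\omega_1(g;\delta)$, valid for any $g \in C(J)$, $x,z \in J$, and $\delta>0$, applied to $g = f_\alpha^\pm$. Applying the positive linear operator $\overline{\mathcal{T}}_n$ to this pointwise inequality and peeling off the term involving $\overline{\mathcal{T}}_n(e_0;x)-1$ yields, for every $x \in J$ and $\alpha \in [0,1]$,
$$|\overline{\mathcal{T}}_n(f_\alpha^\pm;x) - f_\alpha^\pm(x)| \leq M_\alpha^\pm\bigl|\overline{\mathcal{T}}_n(e_0;x)-1\bigr| + \omega_1(f_\alpha^\pm;\delta)\Bigl(\overline{\mathcal{T}}_n(e_0;x) + \delta^{-2}\overline{\mathcal{T}}_n(\phi;x)\Bigr).$$

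Next I would form the P-transform of both sides: multiply by $p_n t^{n-1}/p(t)$ and sum over $n \geq 1$, the term-by-term manipulation being justified by hypothesis (\ref{fz1}). This produces the analogous bound in which each $\overline{\mathcal{T}}_n$ is replaced by the averaged operator $A_t := \frac{1}{p(t)}\sum_{n=1}^{\infty}p_n t^{n-1}\overline{\mathcal{T}}_n$. The decisive step is the choice $\delta = \gamma(t)$: by the very definition of $\gamma(t)$ as a uniform norm on $J$, we have $\gamma(t)^{-2}A_t(\phi)(x) \leq 1$ for every $x \in J$. Moreover, condition~(1) guarantees $\|A_t(e_0)\| \leq 1 + \|A_t(e_0)-e_0\|$, hence $A_t(e_0)(x)$ is dominated by some constant $C$ for $t$ sufficiently near $1^-$. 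Combining these yields
$$|A_t(f_\alpha^\pm)(x) - f_\alpha^\pm(x)| \leq M_\alpha^\pm\,\|A_t(e_0)-e_0\| + (C+1)\,\omega_1(f_\alpha^\pm;\gamma(t)).$$

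Finally I would invoke Lemma~\ref{LMM1} to majorize $\omega_1(f_\alpha^\pm;\gamma(t))$ by $\omega_1^{\mathcal{F}}(f;\gamma(t))$, take the supremum over $x \in J$, then over $\alpha \in [0,1]$ and over both signs, and use property (\ref{vetia1}) together with the definitions of the metrics $D$ and $D^*$ to upgrade the scalar estimate to
$$D^*\Bigl(\tfrac{1}{p(t)}\sum_{n=1}^{\infty}p_n t^{n-1}\mathcal{T}_n(f),\,f\Bigr) \leq M\,\|A_t(e_0)-e_0\| + (C+1)\,\omega_1^{\mathcal{F}}(f;\gamma(t)),$$
where $M := \sup_{\alpha\in[0,1]}\max\{M_\alpha^-,M_\alpha^+\}$, which is finite since $f \in C^{(\mathcal{F})}(J)$ and $J$ is compact. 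Letting $t \to 1^-$ and invoking hypotheses~(1) and~(2) then completes the proof. The main technical obstacle is producing a uniform-in-$x$ control of $A_t(\phi)(x)/\delta^2$; this is precisely what the definition of $\gamma(t)$ through the uniform norm on $J$ is engineered to provide, and it is why the statement couples $\delta$ to $t$ via $\gamma$ rather than leaving $\delta$ free.
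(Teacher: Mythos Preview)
Your proposal is correct and follows essentially the same route as the paper: both use the modulus-of-continuity inequality $|g(z)-g(x)|\le(1+\delta^{-2}(z-x)^2)\omega_1(g;\delta)$ for $g=f_\alpha^\pm$, push it through the averaged operator $A_t$, set $\delta=\gamma(t)$ so that $\gamma(t)^{-2}A_t(\phi)(x)\le 1$, invoke Lemma~\ref{LMM1} and property~(\ref{vetia1}) to pass to $D^*$, and then let $t\to1^-$. The only cosmetic difference is that the paper writes $A_t(e_0;x)=1+(A_t(e_0;x)-e_0(x))$ to obtain the explicit bound $\|A_t(e_0)-e_0\|\,\omega_1^{\mathcal F}(f;\gamma(t))+2\omega_1^{\mathcal F}(f;\gamma(t))+M\|A_t(e_0)-e_0\|$ valid for all $t\in(0,1)$, whereas you absorb $A_t(e_0;x)$ into a constant $C$ for $t$ near $1^-$; both are adequate for the limit conclusion.
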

\begin{proof}
Let $f\in C^{(\mathcal{F})}(J)$ and $x\in J$ be arbitrary but fixed. Since $f_{\alpha}^{\pm}\in C(J)$, we can write, for every $\epsilon>0,$ that there exists a number $\delta>0$ such that  $|f_{\alpha}^{\pm}(z)-f_{\alpha}^{\pm}(x)|< \epsilon$ holds for every $z\in J,$ satisfying $|z-x|<\delta.$ Then for all $z\in J,$  we immediately get that
\begin{eqnarray}
|f_{\alpha}^{\pm}(z)-f_{\alpha}^{\pm}(x)|\leq\left(1+\frac{(z-x)^2}{\delta^2}\right)\omega_1(f_{\alpha}^{\pm};\delta)\nonumber
\end{eqnarray}
and hence we obtain
\begin{eqnarray}
\left|\frac{1}{p(t)}\sum\limits_{n=1}^{\infty}p_{n}\overline{\rm \mathcal{T}}_{n}(f_{\alpha}^{\pm}(z);x)t^{n-1}-f_{\alpha}^{\pm}(x)\right|&\leq& \frac{1}{p(t)}\sum\limits_{n=1}^{\infty}p_{n}t^{n-1}\overline{\rm \mathcal{T}}_{n}\left(\left|f_{\alpha}^{\pm}(z)-f_{\alpha}^{\pm}(x)\right|;x\right)\nonumber\\&+&M_{\alpha}^{\pm}\left|\frac{1}{p(t)}\sum\limits_{n=1}^{\infty}p_{n}t^{n-1}\overline{\rm \mathcal{T}}_{n}(e_0;x)-e_{0}(x)\right|\nonumber\\&\leq&\frac{\omega_1(f_{\alpha}^{\pm};\delta)}{p(t)}\sum\limits_{n=1}^{\infty}p_{n}t^{n-1}\overline{\rm \mathcal{T}}_{n}\left(1+\frac{(z-x)^2}{\delta^2};x\right)\nonumber\\&+&M_{\alpha}^{\pm}\left|\frac{1}{p(t)}\sum\limits_{n=1}^{\infty}p_{n}t^{n-1}\overline{\rm \mathcal{T}}_{n}(e_0;x)-e_{0}(x)\right|\nonumber\\&\leq&\omega_1(f_{\alpha}^{\pm};\delta)\left|\frac{1}{p(t)}\sum\limits_{n=1}^{\infty}p_{n}t^{n-1}\overline{\rm \mathcal{T}}_{n}\left(e_0;x\right)-e_0\right|+\omega_1(f_{\alpha}^{\pm};\delta)\nonumber\\&+&M_{\alpha}^{\pm}\left|\frac{1}{p(t)}\sum\limits_{n=1}^{\infty}p_{n}t^{n-1}\overline{\rm \mathcal{T}}_{n}(e_0;x)-e_{0}(x)\right|\nonumber\\&+&\frac{\omega_1(f_{\alpha}^{\pm};\delta)}{\delta^2 p(t)}\sum\limits_{n=1}^{\infty}p_{n}t^{n-1}\overline{\rm \mathcal{T}}_{n}\left(\phi(z);x\right),\nonumber
\end{eqnarray}
where $M_{\alpha}^{\pm}=||f_{\alpha}^{\pm}||.$ Then considering property (\ref{vetia1}), Lemma (\ref{LMM1}) and definition of the metric $D$ we obtain
\begin{eqnarray}
D\left(\frac{1}{p(t)}\sum\limits_{n=1}^{\infty}p_{n}{\rm \mathcal{T}}_{n}(f;x)t^{n-1},f(x)\right)&\leq&\left|\frac{1}{p(t)}\sum\limits_{n=1}^{\infty}p_{n}t^{n-1}\overline{\rm \mathcal{T}}_{n}\left(e_0;x\right)-e_0(x)\right|\omega_1^{\mathcal{F}}(f;\delta)\nonumber\\&+&\omega_1^{\mathcal{F}}(f;\delta)+M_{\alpha}^{\pm}\left|\frac{1}{p(t)}\sum\limits_{n=1}^{\infty}p_{n}\overline{\rm \mathcal{T}}_{n}(e_0;x)t^{n-1}-e_{0}(x)\right|\nonumber\\&&+\frac{\omega_1(f_{\alpha}^{\pm};\delta)}{\delta^2 p(t)}\sum\limits_{n=1}^{\infty}p_{n}t^{n-1}\overline{\rm \mathcal{T}}_{n}\left(\phi(z);x\right),\nonumber
\end{eqnarray}
where $M:=\sup\limits_{\alpha\in [0,1]}\max\{M_{\alpha}^{+},M_{\alpha}^{-}\}.$ Taking supremum over $x\in I$ and putting $\delta=\gamma(t),$ we conclude that
\begin{eqnarray}
D^*\left(\frac{1}{p(t)}\sum\limits_{n=1}^{\infty}p_{n}{\rm \mathcal{T}}_{n}(f)t^{n-1},f\right)&\leq&||\frac{1}{p(t)}\sum\limits_{n=1}^{\infty}p_{n}t^{n-1}\overline{\rm \mathcal{T}}_{n}\left(e_0\right)-e_0||\omega_1^{\mathcal{F}}(f;\gamma(t))\nonumber\\
&+&2\omega_1^{\mathcal{F}}(f;\gamma(t))\nonumber\\
\end{eqnarray}
\begin{eqnarray}
&+&M||\frac{1}{p(t)}\sum\limits_{n=1}^{\infty}p_{n}\overline{\rm \mathcal{T}}_{n}(e_0)t^{n-1}-e_{0}||\nonumber\\&\leq& K\bigg\{||\frac{1}{p(t)}\sum\limits_{n=1}^{\infty}p_{n}\overline{\rm \mathcal{T}}_{n}(e_0)t^{n-1}-e_{0}||\omega_1^{\mathcal{F}}(f;\gamma(t))\nonumber\\&+&\omega_1^{\mathcal{F}}(f;\gamma(t))+||\frac{1}{p(t)}\sum\limits_{n=1}^{\infty}p_{n}\overline{\rm \mathcal{T}}_{n}(e_0)t^{n-1}-e_{0}||\bigg\}\nonumber
\end{eqnarray}
where $K=\max\{M,2\}.$ Finally, taking the limit as $t\to 1^-$, and considering the hypotheses (i) and (ii) of the theorem, the proof of the theorem is completed.
\end{proof}
\section{Illustrative Example}
\noindent In the following example, we elaborate that our theorem \ref{thm.asym.voronovaskaya1} is a non-trivial generalization of the classical fuzzy Korovkin type result given in \cite{Anas1}. 

\noindent\textbf{Example 1.} Let us consider the  operators of Fuzzy-Bernstein defined as:
$$\mathfrak{B}_{i}^{\mathcal{F}}(f;x)=\bigoplus_{j=0}^i{i\choose j}x^j(1-x)^{i-j}\bigodot f\left(\frac{j}{i}\right)$$
where $f \in C^{\mathcal{F}} [0, 1]$, $x\in [0,1]$ and $i\in\mathbb{N}$.
 Using these operators, for $f \in C^{\mathcal{F}} [0, 1],$ let us consider the fuzzy positive linear operators on $C^{\mathcal{F}} [0, 1]$ defined as follows:
\begin{eqnarray}\label{sheq1}
\overline{\rm{B}_{i}^{\mathcal{F}}}(f;x)=(1+x_i) \bigodot \mathfrak{B}_{i}^{\mathcal{F}}(f;x)
\end{eqnarray}
where $\big<x_i\big>=
\begin{array}{cc}
  \bigg\{\begin{array}{cc}
  1 ,& \text{if $i=m^3,$ $m\in \mathbb{N}$ } ,\\
  0,& \text{otherwise} \\
    \end{array}
\end{array}
$. Then, we have
$$\bigg\{\overline{\rm{B}_{i}^{\mathcal{F}}}(f;x)\bigg\}_{\pm}^{(r)}=\overline{\rm{B}_{i}}(f_{\pm}^{(r)};x)=(1+x_i)\sum_{j=0}^i{i\choose j}x^j(1-x)^{i-j} f_{\pm}^{(r)}\left(\frac{j}{i}\right)$$
It is clear that the sequence $({x_i})$ diverges in the classical sense.
Further, we observe that
$$\overline{\rm\mathfrak{B}_{i}}(e_0;x)=(1+x_i);$$
$$\overline{\rm\mathfrak{B}_{i}}(e_1;x)=(1+x_i)x;$$
$$\overline{\rm\mathfrak{B}_{i}}(e_2;x)=(1+x_i)\left(x^2+\frac{x(1-x)}{i}\right).$$
Now if we take $p_i=1$ for all $i\in\mathbb{N},$ then we obtain
$p(x)=\sum_{n=1}^{\infty}p_n x^{n-1}=\frac{1}{1-x},\,|x|<1$ which implies that $R=1.$ Further, we obtain
\begin{eqnarray*}
\lim_{x\to 1-}\frac{1}{p(x)}\sum_{i=1}^{\infty}p_i x^{i-1}x_i&=&\lim_{x\to 1-}\frac{(1-x)}{x}\sum_{m=1}^{\infty}x^{m^3}.
\end{eqnarray*}
Applying Cauchy root test, the series $\sum_{m=1}^{\infty}x^{m^3}$, is absolutely convergent in $|x|<1$, therefore it follows that
\begin{eqnarray}\label{remeq2}
\lim_{x\to 1^-}\frac{1}{p(x)}\sum_{i=1}^{\infty}p_ix^{i-1}x_i&=&\lim_{x\to 1-}\frac{(1-x)}{x}\sum_{m=1}^{\infty}x^{m^3}=0.
\end{eqnarray}
Hence,
 using $\overline{\rm\mathfrak{B}_{i}}(e_0;x)=(1+x_i),$, we conclude that
$$\lim\limits_{x\to 1^-}\frac{1}{p(x)}\sum\limits_{i=1}^{\infty}p_ix^{i-1}||\overline{\rm\mathfrak{B}_{i}}(e_0)-e_0||=0.$$
Moreover, from $\overline{\rm\mathfrak{B}_{i}}(e_1;x)=(1+x_i)x,$ we have
$$||\overline{\rm\mathfrak{B}_{i}}(e_1)-e_1||\leq  x_i,\;\forall\;i\in \mathbb{N}.$$
Hence, in view of equation (\ref{remeq2}) we get
$$\lim\limits_{x\to1^-}\frac{1}{p(x)}\sum\limits_{i=1}^{\infty}p_i x^{i-1}||\overline{\rm\mathfrak{B}_{i}}(e_1)-e_1||=0.$$
Finally, we have
\begin{eqnarray}
||\overline{\rm\mathfrak{B}_{k}}(e_2)-e_2||&\leq&\frac{1}{4i}+x_i\left(1+\frac{1}{4i}\right)=\tau_1+\tau_2.\nonumber
\end{eqnarray}
Since $\tau_1\to 0,$ as $i\to \infty$, it follows that it converges to $0$ in terms of the power series method. Further, since $\left(1+\frac{1}{4i}\right)$ is a convergence sequence as $i\to \infty,$ it is bounded and therefore in view of (\ref{remeq2}), $\tau_2$ converges to $0$ in terms of the power series method.
Consequently, we obtain
$$\lim\limits_{z\to1^-}\frac{1}{p(x)}\sum\limits_{i=1}^{\infty}p_i x^{i-1}||\overline{\rm\mathfrak{B}_{i}}(e_2)-e_2||=0.$$
So, by Theorem \ref{thm.asym.voronovaskaya1}, we obtain, for all $f\in C^{(\mathcal{F})}[0,1]$ that
\begin{eqnarray}\label{eqthm1}
\lim\limits_{t\to 1^-}D^{*}\left(\frac{1}{p(t)}\sum\limits_{n=1}^{\infty}p_{n}t^{n-1} \overline{\rm{B}_{i}^{\mathcal{F}}}(f),f\right)=0.
\end{eqnarray}
However, it can be said that the fuzzy Korovkin Theorem \ref{thm.asym.voronovaskaya} does not work for the operators defined by (\ref{sheq1}), since $(x_i)$ is not convergent to 0 (in the usual sense).

\section{Conclusions and Comments}
\noindent In the paper, we have made efforts to establish a power series summability analogue of a basic fuzzy Korovkin type approximation theorem. Further with some suitable assumptions, we have proved an approximation theorem based on the first order fuzzy modulus of continuity. In addition to this, we found an example which shows the significance of our study. Interestingly, one can extend our study in the modular spaces (or in the multivariate settings too). It would be also nice to see the new developments in our results through the other known summability techniques.             

\begin{center}
\textbf{Acknowledgements}
\end{center}
The third author is heartily indebted to \textbf{Prof. Ram Narayan Mohapatra, Department of Mathematics, University of Central Florida} for all the invaluable suggestions, motivation and unconditional support to complete this manuscript.

\newpage

\end{document}